\numberwithin{equation}{section}  
\theoremstyle{plain}
\newtheorem{thm}{Theorem}[section]
\newtheorem{cor}[thm]{Corollary}
\newtheorem{lem}{Lemma}[section]
\theoremstyle{definition}
\newtheorem{defn}[thm]{Definition}
\theoremstyle{remark}
\def\R{{\mathbb R}}
\def\C{{\mathbb C}}
\def\L2tx{{L^2(\R_t\times\R^n_x)}}
\def\p#1{{\left({#1}\right)}}
\def\b#1{{\left\{{#1}\right\}}}
\def\n#1{{\left\|{#1}\right\|}}
\def\abs#1{{\left|{#1}\right|}}
\def\jp#1{{\left\langle{#1}\right\rangle}}
\def\supp{\operatorname{supp}}
\def\A{{\mathcal A}}
\title[]
{A local-to-global boundedness argument \\ and Fourier integral operators}
\author[Michael Ruzhansky and Mitsuru Sugimoto]{Michael Ruzhansky and Mitsuru Sugimoto}
\address{
  Michael Ruzhansky:
  \endgraf
  Department of Mathematics
  \endgraf
  Imperial College London
  \endgraf
  180 Queen's Gate, London SW7 2AZ, UK
  \endgraf
  {\it E-mail address} {\rm m.ruzhansky@imperial.ac.uk}
  \endgraf
  \medskip
 Mitsuru Sugimoto:
  \endgraf
  Graduate School of Mathematics
  \endgraf
  Nagoya University
  \endgraf
  Furocho, Chikusa-ku, Nagoya 464-8602, Japan
  \endgraf
  {\it E-mail address} {\rm sugimoto@math.nagoya-u.ac.jp}
  }
\thanks{The first
 author was supported in parts by the EPSRC
 grants EP/K039407/1 and EP/R003025/1, and by the Leverhulme Grant RPG-2017-151.
The second author was supported in parts
by the JSPS KAKENHI 26287022 and 26610021.
}
\subjclass[2010]{Primary 47B38; Secondary 35S30}
\keywords{Integral operators, $L^p$-boundedness, Fourier integral operators}
\date{\today}
\dedicatory{Dedicated to the memory of Professor Hans Duistermaat (1942--2010)}
\begin{document}

\begin{abstract}
We give a criterion for the global boundedness of integral operators which are known to be locally bounded. As an application, we discuss the global $L^p$-boundedness for a class of Fourier integral operators. While the local $L^p$-boundedness of Fourier integral operators is known from the work of Seeger, Sogge and Stein \cite{SSS}, not so many results are available for the global boundedness on $L^p(\R^n)$. We give several natural sufficient conditions for them.
\end{abstract}

\maketitle

\section{Introduction}\label{S1}

Let $\mathcal K$ be an integral operator of the form
\begin{equation*}
\mathcal{K} u(x)=
\int_{\R^n}  K(x,y,x-y)u(y)
\, dy\quad (x\in\R^n)
\end{equation*}
with the kernel $K(x,y,z)$ on $\R^n_x\times\R^n_y\times\R^n_z$.
When $K(x,y,z)=K_0(z)$, it is just the
operator of convolution with $K_0$.
But if we localise it by multiplication by a function $\chi$
then we immediately have such a generalised form
with $K(x,y,z)=\chi(x)\chi(y)K_0(z)$.
A more interesting example is the case when
$K(x,y,z)$ is an oscillatory integral,
and then $\mathcal K$ is called a Fourier integral operator.

Our primary objective is to give a criterion for the global boundedness
\[
\mathcal K:L^p(\R^n) \to L^p(\R^n),\quad1<p<\infty,
\]
when we only know the local boundedness
\[
\mathcal K:L^p_{comp}(\R^n) \to L^p_{loc}(\R^n),\quad1<p<\infty,
\]
or its endpoint boundedness
\[
\mathcal K:H^1_{comp}(\R^n) \to L^1_{loc}(\R^n).
\]
Here and everywhere $H^1=H^1(\R^n)$ denotes the Hardy space
introduced by Fefferman and Stein \cite{FS}.
Let us collectively call such discussion a
{\it local-to-global boundedness argument}.
For example, if we further know the global $L^2$-boundedness and that
the endpoint local boundedness is uniform with respect to the translation
of localised regions,
then our main result (Theorem \ref{Th:main}) states that
we also have the global $L^p$-boundedness 
if the kernel $K(x,y,z)$ is finely controlled on a set away from
its singular support.
We will give the precise statement and its proof in Section \ref{S2}.

\medskip

As an important application of such local-to-global boundedness argument,
we will discuss the global $L^p$-boundedness
of the Fourier integral operators
\begin{equation}\label{FIOg}
\mathcal P u(x)
=\int_{\R^n}\int_{\R^n} e^{i\phi(x,y,\xi)}
a(x,y,\xi) u(y)\, dy  d\xi\quad(x\in \R^n).
\end{equation}
Here $\phi(x,y,\xi)$ is a real-valued function that is called a {\it phase function}
while $a(x,y,\xi)$ is called an {\it amplitude function}.
Following the theory of Fourier integral operators by H\"ormander \cite{Ho}, 
we originally assume that $\phi(x,y,\xi)$ is positively homogeneous
of order $1$ and smooth at $\xi\neq0$,
and that $a(x,y,\xi)$ is smooth and satisfies
a growth condition in $\xi$ with some $\kappa\in\R$:
\[
\sup_{(x,y)\in B}\abs{\partial_x^\alpha\partial_y^\beta\partial_\xi^\gamma
 a(x,y,\xi)}
\leq C_{\alpha\beta\gamma}^B\jp{\xi}^{\kappa-|\gamma|}\quad
(\forall \alpha, \beta, \gamma)\,;\quad 
\langle \xi\rangle=\p{1+|\xi|^2}^{1/2}
\]
for any compact set $B\subset\R^n\times\R^n$.
Then the operator $\mathcal P$ is just a microlocal
expression of the corresponding Lagrangian manifold, and with
the local graph condition, it is microlocally equivalent to the special form
\begin{equation}\label{FIOs}
P u(x)=
\int_{\R^n}\int_{\R^n} e^{i(x\cdot\xi-\varphi(y,\xi))}a(x,y,\xi)u(y)\, dy d\xi
\end{equation}
by an appropriate microlocal change of variables, with possibly another amplitude.

The local $L^p$ mapping properties of Fourier integral operators
have been extensively studied, and can be generally summarised as follows:
\begin{itemize}
\item $\mathcal P$ is $L^2_{comp}$-$L^2_{loc}$-bounded
when $\kappa\leq 0$ (H\"ormander \cite{Ho}, Eskin \cite{Es});
\item $\mathcal P$ is $L^p_{comp}$-$L^p_{loc}$-bounded
when $\kappa\leq -(n-1)|1/p-1/2|$, $1<p<\infty$ (Seeger, Sogge and Stein \cite{SSS});
\item $\mathcal P$ is $H^1_{comp}$-$L^1_{loc}$-bounded
when $\kappa\leq -\frac{n-1}{2}$ (Seeger, Sogge and Stein \cite{SSS});
\item $\mathcal P$ is locally weak $(1,1)$ type when $\kappa\leq -\frac{n-1}{2}$ 
(Tao \cite{Tao:weak11}).
\end{itemize}

The sharpness of the order $-(n-1)|1/p-1/2|$ was shown by Miyachi \cite{Mi}
and Peral \cite{Pe}
(see also \cite{SSS}). 
Therefore, the question also addressed in this paper is when Fourier integral operators are globally
$L^p$-bounded.
Although the operator $\mathcal P$ or $P$
is just a microlocal expression of
the corresponding Lagrangian manifold due to the Maslov cohomology class
(see e.g. Duistermaat \cite{Duistermaat:FIO-book-1996}), 
we still regard it as a globally defined operator
since it is still important for the applications to the theory of
partial differential equations.
Indeed, the operator $P$ is used to:
\begin{itemize}
\item
express solutions to Cauchy problems of hyperbolic equations;
\item
transform operators/equations to other simpler ones (Egorov's theorem).
\end{itemize}
The typical two types of phase functions for each analysis above are
\[
\text{
(I)\hspace{5mm} $\varphi(y,\xi)=y\cdot\xi+|\psi(\xi)|$,
\hspace{10mm}
(II)\hspace{5mm}  $\varphi(y,\xi)=y\cdot\psi(\xi)$,}
\]
where $\psi(\xi)$ is a real vector-valued smooth function which is
positively homogeneous of order $1$ for large $\xi$.
(See Definition \ref{Def:hom} for the precise meaning of this terminology).

As for the global $L^2$-boundedness of Fourier integral operators,
the following result by
Asada and Fujiwara \cite{AF} is fundamental:
\begin{thm}[\cite{AF}]\label{Th:AF}
Let $\phi(x,y,\xi)$ and $a(x,y,\xi)$ be $C^\infty$-functions, and let
\[
D(\phi):=
\begin{pmatrix}
\partial_x\partial_y\phi&\partial_x\partial_\xi\phi
\\
\partial_\xi\partial_y\phi&\partial_\xi\partial_\xi\phi
\end{pmatrix}.
\]
Assume that
$|\det D(\phi)|\geq C>0$.
Also assume that every entry of the matrix $D(\phi)$, $a(x,y,\xi)$
and all their derivatives are bounded. 
Then operator $\mathcal P$ defined by \eqref{FIOg} is $L^2(\R^n)$-bounded.
\end{thm}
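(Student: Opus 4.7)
The plan is to prove the $L^2$-boundedness via the $T^*T$-method combined with the Cotlar--Stein almost-orthogonality lemma. First, I would decompose the amplitude via a smooth partition of unity in the frequency variable: let $\chi \in C_c^\infty(\R^n)$ satisfy $\sum_{k \in \Z^n} \chi(\xi - k) = 1$, set $a_k(x, y, \xi) = \chi(\xi - k) a(x, y, \xi)$, and let $\mathcal P_k$ be the corresponding operator, so that $\mathcal P = \sum_k \mathcal P_k$. Each piece has amplitude supported in a $\xi$-cube of unit size centred at $k$, so that the $\xi$-integration is harmless for each piece in isolation.

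Next, for the cross terms $\mathcal P_j^* \mathcal P_k$, the Schwartz kernel is
\begin{equation*}
K_{j,k}(y, y') = \iiint e^{i\br{\phi(x, y', \xi') - \phi(x, y, \xi)}} \overline{a_j(x, y, \xi)}\, a_k(x, y', \xi') \, dx\, d\xi\, d\xi'.
\end{equation*}
The phase $\Psi(x, \xi, \xi') = \phi(x, y', \xi') - \phi(x, y, \xi)$ has stationary points (in the integration variables) exactly where $\partial_\xi \phi(x, y, \xi) = \partial_\xi \phi(x, y', \xi') = 0$ and $\partial_x \phi(x, y, \xi) = \partial_x \phi(x, y', \xi')$. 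The hypothesis $\abs{\det D(\phi)} \geq C > 0$ together with the boundedness of all entries of $D(\phi)$ means that, for each fixed $x$, the map $(y, \xi) \mapsto (\partial_x \phi, \partial_\xi \phi)(x, y, \xi)$ is a global diffeomorphism of $\R^{2n}$ with uniform biLipschitz bounds; hence the stationary set collapses to $y = y'$, $\xi = \xi'$, and a quantitative argument yields the uniform lower bound
\begin{equation*}
\abs{\nabla_{(x, \xi, \xi')} \Psi}^2 \geq c \p{\abs{y - y'}^2 + \abs{\xi - \xi'}^2}.
\end{equation*}

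Since on the support of $a_j(\cdot, y, \cdot)\, a_k(\cdot, y', \cdot)$ we have $\abs{\xi - \xi'} \geq \abs{j - k} - 2$, repeated integration by parts in $(x, \xi, \xi')$ against the standard transpose operator produces, for every $N$, the kernel estimate
\begin{equation*}
\abs{K_{j,k}(y, y')} \leq C_N (1 + \abs{y - y'})^{-N} (1 + \abs{j - k})^{-N}.
\end{equation*}
Schur's test applied in the $(y, y')$ variables then gives $\n{\mathcal P_j^* \mathcal P_k}_{\Lx \to \Lx} \leq C(1 + \abs{j - k})^{-N}$, and the same argument applied to the adjoint composition yields $\n{\mathcal P_j \mathcal P_k^*}_{\Lx \to \Lx} \leq C(1 + \abs{j - k})^{-N}$. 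The Cotlar--Stein lemma, applied with this rapid decay, concludes that $\mathcal P = \sum_k \mathcal P_k$ is bounded on $\Lx$.

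The main obstacle is the rigorous verification of the uniform lower bound on the phase gradient, and the accompanying symbol bounds in the integration by parts. The pointwise non-degeneracy of $D(\phi)$ makes the linearisation of $(y, \xi) \mapsto (\partial_x \phi, \partial_\xi \phi)$ invertible with uniform bounds, but converting this into a quantitative statement about the phase difference $\phi(x, y', \xi') - \phi(x, y, \xi)$ requires a Taylor-with-integral-remainder argument and careful tracking of the symbols generated at each step of the integration by parts; both rely crucially on the global boundedness of all derivatives of $a$ and of the entries of $D(\phi)$ ensured by the hypotheses of the theorem.
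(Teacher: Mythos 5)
This theorem is quoted from Asada and Fujiwara \cite{AF}; the paper offers no proof of its own, so I judge your plan on its own merits. The overall skeleton --- unit-cube decomposition in $\xi$, the $T^*T$ method, Cotlar--Stein --- is the right framework, and your lower bound $\abs{\nabla_{(x,\xi,\xi')}\Psi}^2\gtrsim\abs{y-y'}^2+\abs{\xi-\xi'}^2$ is a genuine consequence of Hadamard's global inverse function theorem applied to the map $(y,\xi)\mapsto(\partial_x\phi,\partial_\xi\phi)$ at fixed $x$. But this bound is uniform in $x$; it does not \emph{grow} as $\abs{x}\to\infty$, so it cannot control the $x$-integration over the unbounded set $\R^n$ in your kernel $K_{j,k}(y,y')$. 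After $N$ integrations by parts the integrand has size $(1+\abs{\nabla\Psi})^{-N}$, and $\int_{\R^n}(1+\abs{\nabla\Psi})^{-N}\,dx=\infty$ whenever $\nabla\Psi$ stays bounded in the $x$-direction. This is not a mere technicality: take $\phi(x,y,\xi)=x\cdot y+\tfrac12\abs{\xi}^2$ and $a\equiv1$. Then $D(\phi)=I_{2n}$, so the hypotheses hold, yet $\nabla_{(x,\xi,\xi')}\Psi=(y'-y,\,-\xi,\,\xi')$ has no $x$-dependence at all, and for $y=y'$ the $x$-integral defining $K_{j,k}$ is literally divergent. Your claimed pointwise estimate $\abs{K_{j,k}(y,y')}\lesssim(1+\abs{y-y'})^{-N}(1+\abs{j-k})^{-N}$ therefore cannot be reached by this route.

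The missing ingredient is a localisation in the spatial variable as well. In the actual Asada--Fujiwara argument the amplitude is decomposed in both $x$ and $\xi$ (equivalently, one exploits the full diffeomorphism $(x,\xi)\mapsto(\partial_y\phi,\partial_\xi\phi)$ at fixed $y$, and not just the $x=\text{const.}$ slice you use); the cross blocks $\mathcal P_{j}^*\mathcal P_{k}$ and $\mathcal P_{j}\mathcal P_{k}^*$ are then given by absolutely convergent integrals over bounded $x$-regions, Schur's test applies to each block, and Cotlar--Stein closes the argument over the doubly indexed family. Your technical observations --- that $\partial_x^2\Psi$ is controlled via a Taylor expansion by derivatives of the entries of $D(\phi)$ even though $\partial_x^2\phi$ itself is not an entry, and that symbol bounds must be tracked through the integration by parts --- are correct and remain needed, but they must be coupled with spatial localisation before the Schur sums converge.
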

The result of \cite{AF} was used to construct
the solution to the
Cauchy problem of Schr\"odinger equations by means of
the Feynman path integrals in Fujiwara \cite{Fu}.
For the operator $P$ defined by \eqref{FIOs},
the conditions of Theorem \ref{Th:AF} are reduced to
a global version of the local graph condition
\begin{equation}\label{graphcond}
\abs{\det \partial_y\partial_\xi\varphi(y,\xi)}\geq C>0,
\end{equation}
and the growth conditions
\begin{equation}\label{growth}
\begin{aligned}
&\abs{\partial_y^\alpha\partial_\xi^\beta\varphi(y,\xi)} \le
C_{\alpha\beta}\quad
(\forall\, |\alpha+\beta|\geq2, |\beta|\geq1),
\\
&\abs{\partial_x^\alpha\partial_y^\beta\partial_\xi^\gamma a(x,y,\xi)}
\le C_{\alpha\beta\gamma}\quad
(\forall \alpha, \beta, \gamma),
\end{aligned}
\end{equation}
for all $x,y,\xi\in\R^n$. 
Note that the local graph condition 
is required even for the local $L^2$-boundedness
of Fourier integral operators of order zero, so it is rather natural to assume
\eqref{graphcond} for the global $L^2$-boundedness.
We also note that the phase functions of the type (I)
satisfy the growth condition \eqref {growth}, but the type (II) does not.
We mention that other types of growth conditions
were introduced by the authors in \cite{RS}
to obtain the global $L^2$-boundedness for operators with phase functions
of the type (II), and such result was then used to show global smoothing
estimates for dispersive equations in a series of papers \cite{RS2},
\cite{RS3} and \cite{RS4}.

As for the global $L^p$-boundedness, it is deduced by
our local-to-global argument
from the global $L^2$-boundedness (Theorem \ref{Th:AF}) and
the local endpoint result given by Seeger, Sogge and Stein \cite{SSS}.
Indeed, in this paper we establish the following generalised result:
\begin{thm}\label{main}
Let $\varphi(y,\xi)$ and $a(x,y,\xi)$ be $C^\infty$-functions.
Assume that
 $\varphi(y,\xi)$ is positively homogeneous of order $1$ for large $\xi$
and satisfies \eqref{graphcond}.
Also assume that
\begin{align*}
&\abs{\partial_y^\alpha\partial_\xi^\beta(y\cdot\xi-\varphi(y,\xi))}
\le
C_{\alpha\beta}\jp{\xi}^{1-|\beta|}\quad
(\forall\, \alpha, |\beta|\geq1),
\\
&\abs{\partial_x^\alpha\partial_y^\beta\partial_\xi^\gamma a(x,y,\xi)}
\leq C_{\alpha\beta\gamma}
\jp{\xi}^{ -(n-1)|1/p-1/2|-|\gamma|}\quad
(\forall \alpha, \beta, \gamma),
\end{align*}
hold for all $x,y,\xi\in\R^n$.
Then operator $P$ defined by \eqref{FIOs} is $L^p(\R^n)$-bounded, for every $1<p<\infty$.
\end{thm}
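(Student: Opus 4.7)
The plan is to apply Theorem \ref{Th:main} (the local-to-global criterion) to the operator $P$. This reduces the claim to three ingredients: (i) the global $L^2(\R^n)$-boundedness of $P$; (ii) the local endpoint $H^1$-$L^1$ boundedness with constants uniform under translations of the cutoffs; and (iii) a fine decay estimate for the reduced kernel of $P$ away from its singular support.

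For (i), I would verify the hypotheses of Theorem \ref{Th:AF} with $\phi(x,y,\xi) = x\cdot\xi - \varphi(y,\xi)$. Setting $\psi(y,\xi) := y\cdot\xi - \varphi(y,\xi)$, the hypothesis $|\partial_y^\alpha\partial_\xi^\beta\psi| \leq C_{\alpha\beta}\langle\xi\rangle^{1-|\beta|}$ for $|\beta|\geq 1$ guarantees that $\partial_y\partial_\xi\varphi = I - \partial_y\partial_\xi\psi$ and $\partial_\xi^2\varphi = -\partial_\xi^2\psi$ are bounded with bounded derivatives of all orders. Since the off-diagonal blocks of
\[
D(\phi) = \begin{pmatrix} 0 & I \\ -\partial_y\partial_\xi\varphi & -\partial_\xi^2\varphi\end{pmatrix}
\]
are $0$ and $I$, every entry of $D(\phi)$ and all its derivatives are bounded, while $|\det D(\phi)| = |\det\partial_y\partial_\xi\varphi| \geq C > 0$ by \eqref{graphcond}. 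The amplitude and its derivatives are uniformly bounded because $\langle\xi\rangle^{-(n-1)|1/p-1/2|} \leq 1$. Theorem \ref{Th:AF} then yields the global $L^2$-boundedness of $P$.

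For (ii), the hypotheses on $\varphi$ and $a$ are translation-invariant in $(x,y) \in \R^n\times\R^n$, so applying the local endpoint $H^1$-$L^1$ theorem of Seeger--Sogge--Stein \cite{SSS} to the cutoff $\chi(\cdot-\tau)P\chi(\cdot-\tau)$ produces constants depending only on the symbol seminorms already assumed, hence uniform in $\tau\in\R^n$. For (iii), the substitution $z := x - y$ represents the kernel of $P$ as
\[
K(x,y,z) = \int_{\R^n} e^{iz\cdot\xi + i\psi(y,\xi)} a(x,y,\xi)\,d\xi.
\]
The $\xi$-gradient of the phase is $z + \partial_\xi\psi(y,\xi)$; by the $|\beta|=1$ case of the hypothesis, $|\partial_\xi\psi|\leq C_0$ uniformly in $(y,\xi)$, so on $\{|z|\geq 2C_0\}$ this gradient has modulus at least $|z|/2$. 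Repeated integration by parts against $L := (z+\partial_\xi\psi)/(i|z+\partial_\xi\psi|^2) \cdot \nabla_\xi$ then gives $|K(x,y,z)| \leq C_N |z|^{-N}$ for every $N$, uniformly in $(x,y)$. Combining (i)--(iii), Theorem \ref{Th:main} delivers the global $L^p$-boundedness of $P$ for every $1<p<\infty$.

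The main obstacle lies in (iii): one must check that each application of the transpose $L^t$ keeps the integrand in a symbol class of $\xi$ that remains absolutely integrable after multiplication by $a$. This bookkeeping uses the full strength of $|\partial_y^\alpha\partial_\xi^\beta\psi| \leq C\langle\xi\rangle^{1-|\beta|}$ for $|\beta|\geq 2$, since the $\xi$-derivatives falling on $(z+\partial_\xi\psi)/|z+\partial_\xi\psi|^2$ produce higher $\xi$-derivatives of $\psi$ which would otherwise prevent convergence of the resulting $\xi$-integrals after many iterations.
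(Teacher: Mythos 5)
Your overall strategy --- apply Theorem \ref{Th:main} by establishing (i) the global $L^2$-bound via Asada--Fujiwara, (ii) the uniform local $H^1$-$L^1$-bound via Seeger--Sogge--Stein, and (iii) the kernel decay away from the singular support by non-stationary phase --- is exactly the route the paper takes, and your treatment of (i) and of the substitute weight $H(x,y,z)=\inf_\xi|z+\nabla_\xi\Phi(x,y,\xi)|$ in (iii) matches the proof of Corollary \ref{Cor1} and Lemma \ref{Lem:main}. However, the final step has a genuine gap. Theorem \ref{Th:main} delivers $H^1$-$L^1$-boundedness, \emph{not} $L^p$-boundedness for all $1<p<\infty$; the paper (cf.\ the discussion after Theorem \ref{Th:main} and the statement of Theorem \ref{Th:mainFIO}) then uses complex interpolation between the $L^2$-bound (for amplitudes of order $0$) and the $H^1$-$L^1$-bound (for amplitudes of order $-(n-1)/2$), together with duality, to descend to the critical order $-(n-1)|1/p-1/2|$. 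Related to this, your step (ii) applies the SSS endpoint theorem with the amplitude $a$ of order $-(n-1)|1/p-1/2|$, which for $p$ strictly between $1$ and $\infty$ and $p\neq 2$ is strictly larger than the endpoint order $-(n-1)/2$ required by SSS; their theorem simply does not give $H^1$-$L^1$-boundedness at that order, so your (ii) fails for non-endpoint $p$ and the chain breaks there.

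Two further points the paper handles but your sketch omits: first, Theorem \ref{Th:main}'s conclusion for the adjoint requires verifying that $P^*$ is \emph{also} uniformly $H^1_{comp}$-$L^1_{loc}$-bounded, which the paper obtains by tracing Stein's version of the SSS argument in \cite{St2} and which additionally relies on the non-degeneracy \eqref{graphcond}; without the adjoint bound the duality step to $p>2$ has nothing to work with. Second, the SSS $H^1$-$L^1$ theorem is stated for phases positively homogeneous on $\R^n\setminus 0$, while here $\varphi$ is homogeneous only for large $\xi$; the paper addresses this by splitting $a$ with a smooth cutoff $g$ near $\xi=0$, modifying $\varphi$ near the origin on the high-frequency piece and giving a direct kernel estimate for the low-frequency piece. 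Incorporating the interpolation-duality step and these two technical modifications would bring your argument in line with the paper's proof.
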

Theorem \ref{main} together with some related results
will be restated in Section \ref{S3} in a different form
(in particular, Theorem \ref{main} follows from Corollary \ref{Cor1}),
emphasising that they are given as an application of
our local-to-global argument
discussed in Section \ref{S2}.
We remark that Theorem \ref{main} with $p=2$ was also given by
Kumano-go \cite{Ku}.
For the special cases $\varphi(y,\xi)=\varphi(\xi)$ and $a(x,y,\xi)=a(\xi)$,
Theorem \ref{main} was given by Miyachi \cite{Mi}
under the assumptions that $\varphi>0$ and that the compact hypersurface
\[
\Sigma=\b{\xi\in\R^n\setminus0\,:\,\varphi(\xi)=1}
\]
has non-zero Gaussian curvature.
Beals \cite{Be} and Sugimoto \cite{Su} discussed the case when
$\Sigma$ might have vanishing Gaussian curvature but is still convex.



We also mention that phase functions of the type (I) again satisfy
the assumption of Theorem \ref{main}, but type (II) does not.
Unfortunately our local-to-global argument does not work for the type (II)
and the linear growth in $y$ causes an extra requirement for the
growth order of amplitude functions. This is rather natural since one knows that in general, in the type (II) case, there is a loss in weight in global $L^p$ estimates for $p\not=2$, 
see Coriasco and Ruzhansky \cite{CR-CRAS, CR} for global bounds for Fourier integral operators in this case.

To complement some references on the local and global boundedness properties of Fourier integral operators,
we refer to 
the authors' paper \cite{RS-weighted:MN} for the weighted $L^2$- and to
Dos Santos Ferreira and Staubach \cite{Staubach-Dos-Santos-Ferreira:local-global-FIOs-MAMS}
for other weighted properties of Fourier integral operators, to
Rodr\'iguez-L\'opez and Staubach \cite{Rodriguez-Lopez-Staubach:rough-FIOs} for estimates for
rough Fourier integral operators, to \cite{Ruzhansky:CWI-book} for
$L^p$-estimates for Fourier integral operators with complex phase functions, as well as to
\cite{Ruzhansky:FIOs-local-global} for an earlier overview of local and global properties of
Fourier integral operators with real and complex phase functions.
The $L^{p}$-boundedness of bilinear Fourier integral operators has been also investigated,
see e.g. Hong, Lu, Zhang \cite{GLU} and references therein.

%


\section{A Local-to-global boundedness argument}\label{S2}

We discuss when the local boundedness of an integral operator
induces the global one.
Let $\mathcal K$ be an integral operator of the form
\begin{equation}\label{kernelrep}
\mathcal{K} u(x)=
\int_{\R^n}  K(x,y,x-y)u(y)
\, dy\quad (x\in\R^n)
\end{equation}
with a measurable function $K(x,y,z)$ on $\R^n_x\times\R^n_y\times\R^n_z$.
The formal adjoint $\mathcal K^*$ of $\mathcal K$ is given by
\begin{equation}\label{kernelrep_*}
\mathcal{K}^* u(x)=
\int_{\R^n}  K^*(x,y,x-y)u(y)\, dy,\quad  K^*(x,y,z)=\overline{K(y,x,-z)}.
\end{equation}
We introduce a notion of the local boundedness.
By $\chi_B$ we denote the multiplication
by the smooth characteristic function of the set $B\subset\R^n$.
As before, $H^1(\R^n)$ denotes the Hardy space introduced by Fefferman-Stein \cite{FS}.

\begin{defn}
We say that the operator $\mathcal K$
is $H^1_{comp}(\R^n)$-$L^1_{loc}(\R^n)$-bounded if the localised operator
$\chi_{B} \mathcal K \chi_{B}$ is $H^1(\R^n)$-$L^1(\R^n)$-bounded
for any compact set $B\subset\R^n$.
Furthermore, if the operator norm of $\chi_{B_h} \mathcal K \chi_{B_h}$
is  bounded in $h\in\R^n$ for the translated set $B_h=\{x+h:x\in B\}$
of any compact set $B\subset\R^n$, i.e. if
$$
\sup_{h\in\R^n} \|\chi_{B_h} \mathcal K \chi_{B_h}\|_{H^1(\R^n)\to L^1(\R^n)}<\infty,
$$
we say that
the operator $\mathcal K$
is {\em uniformly} $H^1_{comp}(\R^n)$-$L^1_{loc}(\R^n)$-bounded.
\end{defn}
If we introduce the translation operator $\tau_h:f(x)\mapsto f(x-h)$ and
its inverse (formal adjoint) $\tau_h^*=\tau_{-h}$,
we have the equality $\chi_{B_h}=\tau_h\chi_{B}\tau_h^*$.
Since $L^1$ and $H^1$ norms are translation invariant,
$\mathcal K$
is uniformly $H^1_{comp}$-$L^1_{loc}$-bounded
if and only if $\chi_{B}(\tau_h^* \mathcal K \tau_h)\chi_{B}$ is
$H^1$-$L^1$-bounded
for any compact set
$B\subset\R^n$ and the operator norms are bounded in $h\in\R^n$.
We remark that the operator $\tau_h^* \mathcal K \tau_h$
has the expression
\begin{equation}\label{kernelrep_h}
\tau_h^* \mathcal K \tau_h u(x)=
\int_{\R^n}  K_h(x,y,x-y)u(y)
\, dy,\quad
K_h(x,y,z)=K(x+h,y+h,z)
\end{equation}

We have the following main result:
\begin{thm}\label{Th:main}
Suppose that operator $\mathcal K$ defined by \eqref{kernelrep}
is $L^2(\R^n)$-bounded and uniformly
$H^1_{comp}(\R^n)$-$L^1_{loc}(\R^n)$-bounded.
Assume that there exits a measurable function $H(x,y,z)$ which satisfies the
following condition:
\begin{itemize}
\item[(A1)]
There exist constants $d>0$ and $k>n$ such that
\[
\sup_{H(x,y,z)\geq d}
\abs{H(x,y,z)^kK(x,y,z)}<\infty.
\]
\end{itemize}
Furthermore, we set
\[
\widetilde{H}(z):=\inf_{x,y\in\R^n}H(x,y,z).
\]
and assume also the following two conditions:
\begin{itemize}
\item[(A2)]
There exist constants $A>0$ and $A_0>0$ such that
$$\text{$\widetilde H (z)\geq A_0 |z|$ whenever $|z|\geq A$.}$$
\item[(A3)]
There exist constants $b>0$ and $b_0>0$ such that
$$\text{$\widetilde H (z)\leq b_0\widetilde H(z-z')$
whenever $\widetilde H(z)\geq b|z'|$.}$$
\end{itemize}
Then $\mathcal K$ is $H^1(\R^n)$-$L^1(\R^n)$-bounded.
If in addition operator $\mathcal K^*$ defined by \eqref{kernelrep_*}
is uniformly $H^1_{comp}(\R^n)$-$L^1_{loc}(\R^n)$-bounded, then
$\mathcal K^*$ is also $H^1(\R^n)$-$L^1(\R^n)$-bounded.
\end{thm}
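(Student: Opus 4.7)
The natural approach is via the atomic characterisation of $H^1(\Rn)$: it suffices to show $\n{\mathcal K a}_{L^1}\le C$ uniformly over all $(1,\infty)$-atoms, i.e., functions $a$ supported in some ball $B=B(y_0,r)$ with $\n{a}_{L^\infty}\le |B|^{-1}$ and $\int a=0$. Given such an atom, I split $\int_{\Rn}|\mathcal K a(x)|\,dx$ into a near part on $B^\ast:=B(y_0,R)$ and a far part on $(B^\ast)^c$, with $R:=\max(R_0,\,C_0 r)$, where $R_0$ is a fixed constant chosen large enough that the inequalities from (A1)-(A3) can be activated on $(B^\ast)^c$ (e.g.\ $R_0\ge\max(A,\,b_0 d/A_0)$) and $C_0:=b/A_0$. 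I call atoms with $r\le r_0:=R_0/C_0$ (so $R=R_0$) \emph{small} and those with $r>r_0$ (so $R=C_0 r\sim r$) \emph{large}.

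\emph{Near part.} For a small atom, $B^\ast$ is the translate by $y_0$ of the fixed ball $B(0,R_0)$, and since $\supp a\subset B\subset B^\ast$ we have $\chi_{B^\ast}\mathcal K a=\chi_{B^\ast}\mathcal K \chi_{B^\ast}a$; the uniform $H^1_{comp}$-$L^1_{loc}$ hypothesis then yields $\n{\chi_{B^\ast}\mathcal K a}_{L^1}\le C\n{a}_{H^1}\le C$. For a large atom, I invoke the $L^2$-boundedness together with Cauchy-Schwarz:
\[
\n{\chi_{B^\ast}\mathcal K a}_{L^1}\le |B^\ast|^{1/2}\n{\mathcal K a}_{L^2}\le C\,R^{n/2}\n{a}_{L^2}\le C\,R^{n/2}r^{-n/2}\le C,
\]
since $R\sim r$.

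\emph{Far part.} For $|x-y_0|\ge R$ and $y\in B$, (A2) gives $\tilde H(x-y_0)\ge A_0|x-y_0|\ge A_0 R\ge A_0 C_0 r=br\ge b|y-y_0|$; then (A3) applied with $z=x-y_0$ and $z'=y-y_0$ (so that $z-z'=x-y$) yields $\tilde H(x-y)\ge b_0^{-1}\tilde H(x-y_0)\ge b_0^{-1}A_0|x-y_0|$. Since $R_0$ was chosen so that this lower bound exceeds $d$, condition (A1) (via $H\ge\tilde H$) applies and gives $|K(x,y,x-y)|\le C'/\tilde H(x-y)^k\le C''/|x-y_0|^k$. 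Fubini then yields
\[
\int_{(B^\ast)^c}|\mathcal K a(x)|\,dx\le \n{a}_{L^1}\int_{|z|\ge R}\frac{C''}{|z|^k}\,dz\le C'''R^{n-k},
\]
which is bounded uniformly in $r$ since $k>n$ and $R\ge R_0$.

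The main technical point is coordinating the constants $R_0$, $C_0$, $r_0$ consistently so that both regimes of the near estimate and the pointwise kernel bound in the far part fit together. For the second conclusion, one observes that the adjoint kernel $K^\ast(x,y,z)=\overline{K(y,x,-z)}$ satisfies the analogues of (A1)-(A3) with control function $H^\ast(x,y,z):=H(y,x,-z)$, for which $\tilde H^\ast(z)=\tilde H(-z)$; since $\mathcal K^\ast$ is $L^2$-bounded together with $\mathcal K$ and has uniform local $H^1$-$L^1$ boundedness by assumption, the first part applies verbatim to $\mathcal K^\ast$.
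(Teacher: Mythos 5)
Your proposal is correct and follows essentially the same strategy as the paper: reduce to $(1,\infty)$-atoms, distinguish small atoms (handled near the support by the uniform local $H^1$--$L^1$ bound) from large atoms (handled near the support by $L^2$-boundedness plus Cauchy--Schwarz), and control the far part for all atoms by combining (A1)--(A3) to get a quantitative kernel decay $\abs{K(x,y,x-y)}\lesssim |x-y_0|^{-k}$ with $k>n$. The only cosmetic differences are that the paper first translates atoms to the origin and decomposes $\R^n$ by the level sets $\widetilde\Delta_{br}$ of $\widetilde H$ (with a separate lemma showing the complement lies in a ball of radius $\sim r$), whereas you decompose directly by $|x-y_0|\gtrless R$ and inline the kernel estimate; these are equivalent in light of (A2).
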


Theorem \ref{Th:main} means that
the global $L^2$-boundedness and some additional assumptions
induce the global $H^1$-$L^1$-boundedness form the local one.
Then, if we want, we can have the global $L^p$-boundedness 
for $1<p<\infty$ by the
interpolation and the duality argument. 
Immediate examples to which
Theorem \ref{Th:main} can be applied
are pseudo-differential operators
\begin{align*}
\mathcal P_s u(x)
&=\iint_{\R^n\times\R^n} e^{i(x-y)\cdot\xi}a(x,y,\xi)u(y)\, dy d\xi
\\
&=\int_{\R^n} K(x,y,x-y)u(y)\, dy,
\end{align*}
where
\[
K(x,y,z)
=\int_{\R^n}e^{iz\cdot\xi}a(x,y,\xi)\,d\xi.
\]
(Indeed, we can take $H(x,y,z)=|z|$ in this case if $a(x,y,\xi)$
belongs to a standard symbol class.)
More interesting examples for which Theorem \ref{Th:main} yields
new $L^p$ boundedness results are Fourier integral operators which include
pseudo-differential operators as special ones.
They will be intensively discussed in the next section.

Now we give the proof of Theorem \ref{Th:main}.
We only show the assertion for $\mathcal K$ because
conditions (A1)--(A3) induce corresponding conditions
for the kernel $ K^*(x,y,z)$ in \eqref{kernelrep_*} of $\mathcal K^*$ 
 if we take $H^*(x,y,z)=H(y,x,-z)$.
Furthermore, we may take $d=1$ in (A1) otherwise
replace $H$ by $H/d$, and we may also take $b=b_0$ in (A3) otherwise
replace the smaller one by the bigger.

We introduce the notations
\[
\Delta_r:=\{(x,y,z)\in\R^n\times\R^n\times\R^n:H(x,y,z)\geq r\}
\]
and
\[
\widetilde\Delta_r:=\{z\in\R^n:\widetilde H(z)\geq r\}.
\]
Clearly we have the monotonicity of $\Delta_r$ and $\widetilde\Delta_r$
in $r>0$, that is,
$\Delta_{r_1}\subset\Delta_{r_2}$,
$\widetilde\Delta_{r_1}\subset\widetilde\Delta_{r_2}$
for $r_1\geq r_2\geq0$.
On account of them, we have the following:
\begin{lem}\label{Lem:L1est}
Let $r\geq1$ and let $h\in\R^n$.
Suppose $\supp f\subset \{x\in \R^n:|x|\leq r\}$.
Then we have
\[
\left\|\tau_h^*\mathcal{K}\tau_h f\right\|_{L^1(\widetilde\Delta_{br})}
\leq C\,\n{f}_{L^1},
\]
where $C$ is a positive constant independent of $r$ and $h$.
\end{lem}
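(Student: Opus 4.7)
The plan is to reduce the claim to a uniform pointwise bound on an integral of $\abs{K}$. By Tonelli, together with $\supp f\subset\{y\in\R^n:\abs{y}\leq r\}$,
\[
\n{\tau_h^*\mathcal K\tau_h f}_{L^1(\widetilde\Delta_{br})}
\leq \int_{\abs{y}\leq r}\abs{f(y)}\,I(h,y,r)\,dy,\qquad
I(h,y,r):=\int_{\widetilde\Delta_{br}}\abs{K(x+h,y+h,x-y)}\,dx,
\]
so it suffices to prove $I(h,y,r)\leq C$ uniformly in $h\in\R^n$, $\abs{y}\leq r$ and $r\geq 1$. Using the preliminary reductions $d=1$ and $b=b_0$ already justified in the text, the rest will proceed in two short steps.

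First, I would use (A3) to transfer the size of $\widetilde H$ from $x$ to the difference $z=x-y$. When $x\in\widetilde\Delta_{br}$ and $\abs{y}\leq r$, one has $\widetilde H(x)\geq br\geq b\abs{y}$, so (A3) applied with $z=x$ and $z'=y$ gives $\widetilde H(x-y)\geq \widetilde H(x)/b_0\geq r\geq 1$. Since $\widetilde H\leq H$ pointwise and $d=1$, this places the point $(x+h,y+h,x-y)$ in the region where (A1) applies, and yields the pointwise bound
\[
\abs{K(x+h,y+h,x-y)}\leq M\,H(x+h,y+h,x-y)^{-k}\leq M\,\widetilde H(x-y)^{-k}.
\]
Substituting $z=x-y$ in $I(h,y,r)$ and observing from the same (A3) argument that the image of $\widetilde\Delta_{br}$ under $x\mapsto x-y$ lies inside $\widetilde\Delta_{r}$, the task reduces to showing
\[
J(r):=\int_{\widetilde\Delta_r}\widetilde H(z)^{-k}\,dz\leq C\qquad (r\geq 1).
\]

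Second, I would use (A2) to bound $J(r)$. Splitting the region at $\abs{z}=A$: on $\abs{z}<A$ the integrand is $\leq r^{-k}\leq 1$ over a set of measure depending only on $n$ and $A$, while on $\abs{z}\geq A$ the estimate $\widetilde H(z)\geq A_0\abs{z}$ dominates the integrand by $A_0^{-k}\abs{z}^{-k}$, giving a tail that converges precisely because $k>n$. Hence $J(r)\leq C$ uniformly, and therefore $I(h,y,r)\leq C$ uniformly, completing the scheme. I do not expect a real obstacle: the translation parameter $h$ never interacts with the decay, because the first two slots of $K$ are controlled only through the third via $\widetilde H$; and the hypothesis $r\geq 1$ is used only to make $H(\cdot)\geq d=1$ so that (A1) is applicable. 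The one point to execute carefully is the \emph{direction} in which (A3) is used, namely that it lets us pass from control of $\widetilde H(x)$ to control of $\widetilde H(x-y)$ via the difference --- which is exactly the move demanded by the change of variables $z=x-y$.
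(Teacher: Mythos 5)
Your proof is correct and is essentially the same argument as the paper's: both use (A3) to pass from $\widetilde H(x)\geq br$ to control of $\widetilde H(x-y)$, then (A1) to bound $\abs{K}$ by a constant times $H^{-k}\leq\widetilde H(x-y)^{-k}$, and finally (A2) to show that the resulting integral of $\widetilde H^{-k}$ converges because $k>n$. The only (cosmetic) difference is that you run Tonelli explicitly and substitute $z=x-y$ so that the final integral is $\int_{\widetilde\Delta_r}\widetilde H(z)^{-k}\,dz$, whereas the paper keeps the $x$-variable, bounds $\widetilde H(x-y)^{-k}\leq b^k\widetilde H(x)^{-k}$, and integrates over $\widetilde\Delta_{br}\subset\widetilde\Delta_b$; both reductions land on the same convergent integral.
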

\begin{proof}
First we consider the case $h=0$.
For $x\in\widetilde\Delta_{br}$ and $|y|\leq r$, we have
$\widetilde H(x)\geq br$ by the definition of $\Delta_{br}$,
and hence we also have
$\widetilde H(x)\geq b|y|$.
Then from (A3) with $b=b_0$, we obtain
$$br\leq\widetilde H(x)\leq b\widetilde H(x-y)\leq bH(x,y,x-y)$$
which implies $(x,y,x-y)\in\Delta_{r}$ and
$\widetilde H(x,y,x-y)^{-1}\leq b \widetilde H(x)^{-1}$.
Then we have
\begin{align*}
|\mathcal{K} f(x)|
&\leq b^k\widetilde H(x)^{-k}\int_{|y|\leq r}
\abs{H(x,y,x-y)^kK(x,y,x-y)f(y)}\,dy
\\
&\leq b^k\widetilde H(x)^{-k}
\n{H(x,y,z)^kK(x,y,z)}_{L^\infty(\Delta_r)}
\n{f}_{L^1},
\end{align*}
for $x\in\widetilde\Delta_{br}$.
Hence, 
by the monotonicity $\Delta_{br}\subset\Delta_{b}$
and $\widetilde\Delta_{r}\subset\widetilde\Delta_{1}$ ($r\geq1$), we have
\begin{align*}
\left\|\mathcal{K} f\right\|_{L^1(\widetilde\Delta_{br})}
&\leq b^k
\left\|\widetilde H(x)^{-k}\right\|_{L^1(\widetilde\Delta_{br})}
\n{H(x,y,z)^kK(x,y,z)}_{L^\infty(\Delta_r)}\n{f}_{L^1}
\\
&\leq b^k\left\|\widetilde H(x)^{-k}
\right\|_{L^1(\widetilde\Delta_{b})}
\n{H(x,y,z)^kK(x,y,z)}_{L^\infty(\Delta_1)}\n{f}_{L^1}
\\
&\leq C\n{f}_{L^1},
\end{align*}
for $k>n$,
where we have used (A2) to justify the estimate
\begin{align*}
\n{\widetilde H(z)^{-k}}_{L^1(\widetilde\Delta_b)}
&\leq
\n{\widetilde H(z)^{-k}}_{L^1(\widetilde\Delta_b\cap\b{|z|\leq A})}+
\n{\widetilde H(z)^{-k}}_{L^1(\widetilde\Delta_b\cap\b{|z|\geq A})}
\\
&\leq
b^{-k}\n{1}_{L^1(|z|\leq A)}+
A_0^{-k}\n{|z|^{-k}}_{L^1(|z|\geq A)}
\\
&\leq C,
\end{align*}
and also (A1) with $d=1$.

For general $h\in\R^n$, we apply the same argument for $K_h$
in \eqref {kernelrep_h} and
\[
H_h(x,y,z)=H(x-h,y-h,z)
\]
instead of $K$ and $H$, respectively.
We remark that conditions (A1), (A2) and (A3) in Theorem \ref{Th:main} are
invariant in $h\in\R^n$ in the sense that we have
\begin{align*}
&\sup_{H_h(x,y,z)\geq d}
\abs{H_h(x,y,z)^kK_h(x,y,z)}
=
\sup_{H(x,y,z)\geq d}
\abs{H(x,y,z)^kK(x,y,z)},
\\
&\widetilde{H_h}(z)=\inf_{x,y\in\R^n}H_h(x,y,z)=\widetilde{H}(z).
\end{align*}Then we have the same estimates with the same constants
but $\mathcal K$ replaced by
$\tau_h^*\mathcal{K}\tau_h$.
This finishes the proof.
\end{proof}

\begin{lem}\label{Lem:outside}
Let $r\geq1$.
Then there exists a constant $c>0$ independent of $r$ such that
$\R^n\setminus\widetilde\Delta_{br}\subset\b{z:|z|< c\,r}$.
\end{lem}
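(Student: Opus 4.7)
The plan is to argue by contrapositive: to show that whenever $|z| \geq cr$ (for an appropriate constant $c$ depending only on the structural constants $A, A_0, b$), the point $z$ lies in $\widetilde\Delta_{br}$, i.e. $\widetilde H(z) \geq br$. Since this gives exactly $\R^n \setminus \widetilde\Delta_{br} \subset \{|z| < cr\}$, it proves the lemma.

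First, I would want to invoke (A2) to translate the assumption on $|z|$ into a lower bound on $\widetilde H(z)$. Condition (A2) requires $|z| \geq A$ to apply. Since $r \geq 1$, choosing $c \geq A$ ensures $|z| \geq cr \geq A$, so (A2) yields $\widetilde H(z) \geq A_0 |z|$.

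Next, I would combine this with $|z| \geq cr$ to obtain $\widetilde H(z) \geq A_0 c\, r$. To force the conclusion $\widetilde H(z) \geq br$, it suffices to require $A_0 c \geq b$, i.e. $c \geq b/A_0$. Taking
\[
c := \max\!\left(A,\ \frac{b}{A_0}\right),
\]
which depends only on the constants appearing in (A2) and (A3) and not on $r$, both inequalities hold simultaneously, and the argument closes.

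There is no real obstacle here; the only subtlety is the use of $r \geq 1$ to absorb the fixed threshold $A$ into the quantity $cr$, so that (A2) can be applied uniformly in $r$. The structure $c = \max(A, b/A_0)$ makes the independence of $c$ from $r$ manifest, which is the content of the lemma.
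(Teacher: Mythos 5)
Your proof is correct and follows essentially the same argument as the paper: combine condition (A2) with the hypothesis $r\geq 1$ to bound $|z|$ by a multiple of $r$, taking $c=\max(A,\,b/A_0)$. The paper's version makes a small (and actually superfluous) detour through an almost-minimizing pair $(x_0,y_0)$ for $\widetilde H(z)$, which only worsens the constant to $\max(A,\,2b/A_0)$; your contrapositive framing is a cleaner presentation of the same idea.
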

\begin{proof}
Let $z\in\R^n\setminus\widetilde\Delta_{br}$, that is, $\widetilde H(z)<br$.
By the definition of $\widetilde H(z)$,
there exist $x_0,y_0,\xi_0\in\R^n$ such that
$H(x_0,y_0,z)\leq 2\widetilde H(z)$, and by (A2) we have
$A_0|z|\leq H(x_0,y_0,z)$ for $|z|\geq A$.
Hence we have $|z|\leq (2b/A_0)\,r$ for $|z|\geq A$.
On the other hand, we always have $|z|\leq Ar$ for $|z|\leq A$
since $r\geq1$,
and we have the conclusion.
\end{proof}
Now we are ready to prove the $H^1$-$L^1$-boundedness.
We use the characterisation of $H^1$ by the atomic
decomposition proved by Coifman and Weiss \cite{CW}.
That is, any $f\in H^1(\R^n)$ can be
represented as 
$$
f=\sum_{j=1}^\infty\lambda_jg_j,\quad\lambda_j\in\C,\quad g_j:\textrm{ atom},
$$
and the norm $\|f\|_{H^1}$ is equivalent to the norm
$\n{\b{\lambda_j}_{j=1}^\infty}_{\ell^1}=\sum^\infty_{j=1}|\lambda_j|$.
Here we call a function $g$
on $\R^n$ an atom if there is a ball $B=B_g\subset\R^n$ such that
$\supp g\subset B$, $\|g\|_{L^\infty}\leq|B|^{-1}$ ($|B|$ is the
Lebesgue measure of the ball $B$) and
$\int g(x)\,dx=0$.
From this, all we
have to show is the estimate
$$
\left\|\mathcal{K}g\right\|_{L^1(\R^n)}\leq C,
$$
with some constant $C>0$ for all atoms $g$.
By an appropriate translation,
it is further reduced to the estimate
$$
\left\|\tau_h^*\mathcal{K}\tau_hf\right\|_{L^1(\R^n)}\leq C,
\quad f\in \A_r,
$$
where $\A_r$
is the set of all functions $f$ on $\R^n$
such that
\begin{equation}\label{EQ:atoms}
\supp f\subset B_r=\{x\in\R^n:|x|\leq r\},\quad\|f\|_{L^\infty}\leq |B_r|^{-1},
\quad\int f(x)\,dx=0.
\end{equation} 
Here and hereafter in this section, $C$ always denotes
a constant which is independent of $h\in\R^n$ and
$0<r<\infty$, and which may differ from one formula to another.

Suppose $f\in\A_{r}$ with $r\geq 1$.
Then we split $\R^n$ into two parts
$\widetilde\Delta_{br}$ and $\R^n\setminus\widetilde\Delta_{br}$.
For the part $\widetilde\Delta_{br}$,
we have by Lemma \ref{Lem:L1est} that
\[
\left\|\tau_h^*\mathcal{K} \tau_hf\right\|_{L^1(\widetilde\Delta_{br})}
\leq C\n{f}_{L^1}\leq C.
\]
For the part $\R^n\setminus\widetilde\Delta_{br}$,
we have
by Lemma \ref{Lem:outside} and the Cauchy-Schwarz inequality
\begin{align*}
\left\|\tau_h^*\mathcal{K} \tau_hf\right\|_{L^1(\R^n\setminus\widetilde\Delta_{br})}
&\leq
\|1\|_{L^2(|x|< c\,r)}
\left\|\tau_h^*\mathcal{K} \tau_hf\right\|_{L^2(\R^n)}
\\
&\leq
Cr^{n/2}\|f\|_{L^2(\R^n)}
\leq C,
\end{align*}
where we have used the assumption that
$\mathcal{K}$ is $L^2$-bounded, and \eqref{EQ:atoms} in the last inequality.

Suppose now $f\in\A_{r}$ with $r\leq 1$.
Then we split $\R^n$ into the
parts $\Delta_{b}$ and $\R^n\setminus\Delta_b$.
For the part  $\Delta_{b}$,
we have by Lemma \ref{Lem:L1est} with $r=1$ and the inclusion
$\supp f\subset B_r\subset B_1$
\[
\left\|\tau_h^*\mathcal{K} \tau_hf\right\|_{L^1(\widetilde\Delta_{b})}
\leq C\n{f}_{L^1}\leq C.
\]
For the part $\R^n\setminus\Delta_b$,
we have by Lemma \ref{Lem:outside} that
\begin{align*}
\left\|\tau_h^*\mathcal{K} \tau_h f\right\|_{L^1(\R^n\setminus\Delta_{b})}
&\leq
\left\|\tau_h^*\mathcal{K} \tau_h f\right\|_{L^1(|x|<c)}
\\
&\leq C\n{f}_{H^1}
\leq C,
\end{align*}
where we have used the fact that $\mathcal{K}$ is uniformly
$H^1_{comp}$-$L^1_{loc}$-bounded.
Thus the proof of Theorem \ref{Th:main}
is complete.


\section{Fourier integral Operators}\label{S3}
A typical example of integral operators \eqref{kernelrep}
which we have in mind is Fourier integral operators of the form
\begin{equation}\label{FIOp}
\mathcal{P} u(x)=
\int_{\R^n}\int_{\R^n} e^{i\phi(x,y,\xi)}a(x,y,\xi)u(y)
\, dy d\xi\quad (x\in\R^n).
\end{equation}
For convenience, we introduce the function $\Phi(x,y,\xi)$
to write
\begin{equation}\label{Phase}
\phi(x,y,\xi)=(x-y)\cdot\xi+\Phi(x,y,\xi),
\end{equation}
and then we have the kernel representation
\[
\mathcal{P} u(x)=
\int_{\R^n}  K(x,y,x-y)u(y)
\, dy
\]
with
\begin{equation}\label{kernel}
K(x,y,z)
=\int_{\R^n}e^{i\{z\cdot\xi+\Phi(x,y,\xi)\}}a(x,y,\xi)\,d\xi.
\end{equation}
In particular, $\mathcal{P}$ is a pseudo-differential operator
when $\Phi(x,y,\xi)=0$.
We remark that the formal adjoint $\mathcal P^*$ of $\mathcal P$ is
of the same form \eqref{FIOp} with the replacement
\begin{equation}\label{adjoint}
\begin{aligned}
&\Phi(x,y,\xi)\longmapsto\Phi^*(x,y,\xi)=-\Phi(y,x,\xi),
\\
&a(x,y,\xi)\longmapsto a^*(x,y,\xi)=\overline{a(y,x,\xi)},
\end{aligned}
\end{equation}
and also the operator $\tau_h^*\mathcal{P}\tau_h$
with
\begin{equation}\label{translation}
\begin{aligned}
&\Phi(x,y,\xi)\longmapsto\Phi^h(x,y,\xi)=\Phi(x+h,y+h,\xi),
\\
&a(x,y,\xi)\longmapsto a^h(x,y,\xi)=a(x+h,y+h,\xi),
\end{aligned}
\end{equation}
as special cases of the general rules 
\eqref{kernelrep_*} and \eqref{kernelrep_h}.

We introduce a class of amplidtude functions $a(x,y,\xi)$:
\begin{defn}\label{symbol}
For $\kappa\in\R$, $S^\kappa$ denotes
the class of smooth functions
$a=a(x,y,\xi)\in C^\infty(\R^n\times\R^n\times\R^n)$
satisfying the estimate
\[
\abs{
\partial^\alpha_x \partial^\beta_y \partial^\gamma_\xi a(x,y,\xi)
}
\leq C_{\alpha\beta\gamma}\jp{\xi}^{\kappa-|\gamma|}
\]
for all $x,y,\xi\in\R^n$ and all multi-indices $\alpha,\beta,\gamma$.
\end{defn}

Let us now try to apply Theorem \ref{Th:main} from the previous section for
Fourier integral operators defined by \eqref{FIOp}.
Our natural choice of $H(x,y,z)$ is a defining function of the singular
support of the kernel.
For example, the kernels of pseudo-differential operators 
(that is, \eqref{kernel} with $\Phi(x,y,z)=0$) is singular
only when $z=0$, and we can take $H(x,y,z)=|z|$.
Indeed we can easily see that it satisfies assumptions (A1)--(A3)
in Theorem \ref{Th:main} if $a(x,y,\xi)$ belongs to a class $S^m$.
For general Fourier integral operators, we can find $H(x,y,z)$ 
corresponding to $\Phi(x,y,z)$ by the same consideration:
\begin{lem}\label{Lem:main}
Assume that $a=a(x,y,\xi)\in S^{\kappa}$ with some $\kappa\in\R$, and assume
also that $\Phi(x,y,\xi)$ is a real-valued $C^\infty$-function
and that $\partial^\gamma_\xi\Phi(x,y,\xi)\in S^0$
for $|\gamma|=1$.
Let
\[
H(x,y,z):=\inf_{\xi\in\R^n}\abs{z+\nabla_\xi\Phi(x,y,\xi)},
\]
and let
\[
\widetilde H(z):=\inf_{x,y\in\R^n}H(x,y,z)=
\inf_{x,y,\xi\in\R^n}\abs{z+\nabla_\xi\Phi(x,y,\xi)}.
\]
Then $K(x,y,z)$ defined by \eqref{kernel} satisfies 
assumptions {\rm (A1)--(A3)} in Theorem \ref{Th:main}.
\end{lem}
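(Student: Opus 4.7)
The plan is to verify the three conditions (A1)--(A3) for the specified $H$ and $\widetilde H$. Conditions (A2) and (A3) are elementary consequences of the hypothesis $\partial_\xi^\gamma\Phi\in S^0$ for $|\gamma|=1$, while (A1) is the standard non-stationary phase estimate applied to the oscillatory integral \eqref{kernel}.

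For (A2), the hypothesis $\partial_\xi\Phi\in S^0$ gives $|\nabla_\xi\Phi(x,y,\xi)|\leq C$ uniformly in $(x,y,\xi)$, so $|z+\nabla_\xi\Phi|\geq |z|-C$; taking the infimum yields $\widetilde H(z)\geq|z|-C$, and hence $\widetilde H(z)\geq |z|/2$ whenever $|z|\geq 2C$. For (A3), the triangle inequality
\[
\abs{z+\nabla_\xi\Phi}\leq\abs{(z-z')+\nabla_\xi\Phi}+|z'|
\]
and passage to the infimum over $(x,y,\xi)$ give $\widetilde H(z)\leq\widetilde H(z-z')+|z'|$; the choice $\widetilde H(z)\geq 2|z'|$ then forces $\widetilde H(z)/2\leq\widetilde H(z-z')$, so (A3) holds with $b=b_0=2$.

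For (A1), I would apply non-stationary phase to \eqref{kernel}. Setting $\rho=\rho(x,y,z,\xi):=z+\nabla_\xi\Phi(x,y,\xi)$, so that $|\rho|\geq H(x,y,z)$ by definition, the first-order operator
\[
L:=\frac{1}{i|\rho|^2}\,\rho\cdot\nabla_\xi
\]
satisfies $L\,e^{i(z\cdot\xi+\Phi)}=e^{i(z\cdot\xi+\Phi)}$, so $N$ integrations by parts give
\[
K(x,y,z)=\int_{\R^n}e^{i(z\cdot\xi+\Phi(x,y,\xi))}\,(L^t)^N a(x,y,\xi)\,d\xi.
\]
On $\{H\geq d\}$ one has $|\rho|^{-1}\leq H^{-1}\leq d^{-1}$, while $a\in S^\kappa$ and $\partial_\xi^\gamma\Phi=O(\jp\xi^{1-|\gamma|})$ for $|\gamma|\geq 1$ (the latter following from $\partial_\xi\Phi\in S^0$). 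By induction on $N$ I would establish
\[
\abs{(L^t)^N a(x,y,\xi)}\leq C_N\,H(x,y,z)^{-N}\jp\xi^{\kappa-N}
\]
uniformly on $\{H\geq d\}$; choosing any integer $N>n+\max(\kappa,0)$ then makes the right-hand side integrable in $\xi$ and yields $\abs{K(x,y,z)}\leq C_N\,H(x,y,z)^{-N}$, which is (A1) with $k=N>n$.

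The main obstacle is the inductive bookkeeping for the last estimate: each application of $L^t$ generates several terms, some from differentiating $a$ and some from differentiating the amplitude factor $\rho/|\rho|^2$, and one must verify that every resulting term gains the same factor $\jp\xi^{-1}H^{-1}$ with no hidden dependence on $|z|$ beyond what is already captured by $|\rho|$. The key algebraic observation is that
\[
\partial_{\xi_k}\!\left(\frac{\rho_j}{|\rho|^2}\right)=\frac{\partial_{\xi_k}\rho_j}{|\rho|^2}-\frac{2\rho_j(\rho\cdot\partial_{\xi_k}\rho)}{|\rho|^4}=O\!\left(\frac{\jp\xi^{-1}}{|\rho|^2}\right),
\]
using $\partial_\xi\rho=\partial_\xi^2\Phi=O(\jp\xi^{-1})$ and $|\rho_j|\leq|\rho|$, which is precisely what allows one to trade each derivative of the weight $\rho/|\rho|^2$ for a factor of $\jp\xi^{-1}|\rho|^{-1}$ and so closes the induction with constants uniform on $\{H\geq d\}$.
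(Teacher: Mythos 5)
Your proof is correct and follows the same route as the paper: (A2) and (A3) via the triangle inequality using the uniform bound on $\nabla_\xi\Phi$, and (A1) via non-stationary phase integration by parts with the operator $L=\rho\cdot\nabla_\xi/(i|\rho|^2)$. The only difference is bookkeeping: the paper writes $(L^*)^{n+1}$ informally, whereas you take $N>n+\max(\kappa,0)$ integrations by parts, which is the more careful count needed to ensure $\xi$-integrability when $\kappa\geq 1$.
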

\begin{proof}
The expression \eqref{kernel} is justified by the
integration by parts
$$
K(x,y,z)=\int_{\R^n}e^{i\{z\cdot\xi+\Phi(x,y,\xi)\}}
\left(L^*\right)^{n+1} a(x,y,\xi)\,d\xi
$$
outside the set
\begin{align*}
\Sigma&=
\{(x,y,-\nabla_\xi\Phi(x,y,\xi))\in\R^n\times\R^n\times\R^n:x,y,\xi\in\R^n\}
\\
&=\{(x,y,z)\in\R^n\times\R^n\times\R^n:H(x,y,z)=0 \},
\end{align*}
where $L^*$ is the transpose of the operator
$$
L=\frac{(z+\nabla_\xi\Phi)\cdot\nabla_\xi}{i|z+\nabla_\xi\Phi|^2}.
$$
Noticing that $d\leq H(x,y,z)$ implies $d\leq|z+\nabla_\xi\Phi(x,y,\xi)|$
for all $\xi\in\R^n$, we easily have (A1).
On the other hand,
we have
$$|z|\leq |z+\nabla_\xi\Phi(x,y,\xi)|+|\nabla_\xi\Phi(x,y,\xi)|
\leq |z+\nabla_\xi\Phi(x,y,\xi)|+N,$$
with some constant $N>0$
for any $x,y,\xi\in\R^n$,
hence $|z|\leq \widetilde H(z)+N$.
Then for $|z|\geq 2N$ we have $|z|\leq \widetilde H(z)+|z|/2$,
hence $\widetilde H(z)\geq|z|/2$, that is, (A2).
Finally, if $\widetilde H(z)\geq2|z'|$,
then we have
\begin{align*}
\widetilde H(z)
&\leq H(x,y,z)\leq|z+\nabla_\xi\Phi(x,y,\xi)|
\leq|z-z'+\nabla_\xi\Phi(x,y,\xi)|+|z'| 
\\
&\leq|z-z'+\nabla_\xi\Phi(x,y,\xi)|+\widetilde H(z)/2
\end{align*}
hence $\widetilde H(z)\leq2|z-z'+\nabla_\xi\Phi(x,y,\xi)|$
for all $x,y,\xi\in\R^n$, hence  $\widetilde H(z)\leq2\widetilde H(z-z')$
that is, we have (A3).
\end{proof}

From Lemma \ref{Lem:main}, we immediately obtain
the following result from Theorem \ref{Th:main}:
\begin{thm}\label{Th:mainFIO}
Let $\mathcal P$ be a operator defined by \eqref{FIOp} with \eqref{Phase}.
Let $1<p<\infty$, let $\kappa_1\leq0$, and let $\kappa\leq2\kappa_1|1/p-1/2|$
Assume the following conditions:
\begin{itemize}
\item[(B1)]
$\Phi(x,y,\xi)$ is a real-valued $C^\infty$-function
and $\partial^\gamma_\xi\Phi(x,y,\xi)\in S^0$
for $|\gamma|=1$.
\item[(B2)]
$\mathcal P$ is
$L^2(\R^n)$-bounded whenever $a(x,y,\xi)\in S^{0}$.
\item[(B3)]
$\mathcal P$ and $\mathcal P^*$ are uniformly
$H^1_{comp}(\R^n)$-$L^1_{loc}(\R^n)$-bounded
whenever $a=a(x,y,\xi)\in S^{\kappa_1}$.
\end{itemize}
Then $\mathcal P$ is $L^p(\R^n)$-bounded
for any $a=a(x,y,\xi)\in S^{\kappa}$.
\end{thm}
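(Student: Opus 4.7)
The plan is to combine Theorem \ref{Th:main} with Stein's complex interpolation and then duality. First I would establish the endpoint $H^1(\R^n)$--$L^1(\R^n)$-boundedness of $\mathcal P$ in the case $a\in S^{\kappa_1}$ by verifying the hypotheses of Theorem \ref{Th:main}. Given (B1), Lemma \ref{Lem:main} supplies the function
\[
H(x,y,z)=\inf_{\xi\in\R^n}\abs{z+\nabla_\xi\Phi(x,y,\xi)},
\]
for which the kernel of $\mathcal P$ automatically satisfies (A1)--(A3). Assumption (B2) delivers the global $L^2$-boundedness (using $S^{\kappa_1}\subset S^0$ since $\kappa_1\leq 0$), while (B3) supplies the uniform local $H^1$--$L^1$-boundedness of both $\mathcal P$ and $\mathcal P^*$. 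Theorem \ref{Th:main} then yields $\mathcal P,\mathcal P^*:H^1(\R^n)\to L^1(\R^n)$ whenever $a\in S^{\kappa_1}$.

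Next, to cover $1<p<2$, I would deploy Stein's complex interpolation on the analytic family of Fourier integral operators $\mathcal P_z$ sharing the phase $\phi$ of $\mathcal P$ but carrying the amplitude
\[
a_z(x,y,\xi):=a(x,y,\xi)\,\jp{\xi}^{-\kappa+\kappa_1 z}.
\]
On $\Re z=0$ we find $a_{iy}\in S^0$ with seminorms of order $(1+|y|)^N$, so (B2) provides the $L^2\to L^2$ bound on this line with at most polynomial growth in $|y|$. On $\Re z=1$ we have $a_{1+iy}\in S^{\kappa_1}$, again with polynomial seminorm growth, so the endpoint step above gives $\mathcal P_{1+iy}:H^1\to L^1$ analogously. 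At $z=\theta:=2(1/p-1/2)\in[0,1]$ the factor $\jp{\xi}^{-\kappa+\kappa_1\theta}$ reduces to $1$ precisely when $\kappa=2\kappa_1(1/p-1/2)$, and interpolation between the pairs $(H^1,L^1)$ and $(L^2,L^2)$ then gives $\mathcal P=\mathcal P_\theta:L^p\to L^p$ at this critical order; for smaller $\kappa\leq 2\kappa_1(1/p-1/2)$ one invokes the embedding $S^\kappa\subset S^{2\kappa_1(1/p-1/2)}$.

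The range $2\leq p<\infty$ I would then handle by duality. By \eqref{adjoint} the adjoint $\mathcal P^*$ is itself an FIO of the form \eqref{FIOp} with phase $\Phi^*$ still obeying (B1) and amplitude $a^*\in S^\kappa$, and hypotheses (B2)--(B3) are visibly symmetric under taking adjoints. Hence the preceding argument applied to $\mathcal P^*$ gives $\mathcal P^*:L^{p'}\to L^{p'}$ for $\kappa\leq 2\kappa_1(1/p'-1/2)=2\kappa_1\abs{1/p-1/2}$, whence dualising yields the $L^p$-boundedness of $\mathcal P$.

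The step I expect to be most delicate is the quantitative bookkeeping required by Stein's theorem: one must verify that the $H^1\to L^1$ norm of $\mathcal P_{1+iy}$ produced by Theorem \ref{Th:main} depends only polynomially on $|y|$. Tracing through the proof of Theorem \ref{Th:main}, this bound is controlled by the constants entering (A1)--(A3) together with the uniform local and global $L^2$ bounds from (B2)--(B3); each of these depends on only finitely many seminorms of $a_{1+iy}$, which themselves grow at most polynomially in $|y|$. The admissibility hypothesis of Stein's theorem is thus satisfied, and the rest is routine.
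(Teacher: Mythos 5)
Your proposal is correct and follows essentially the same route as the paper: combine Theorem \ref{Th:main} with Lemma \ref{Lem:main} to obtain the endpoint $H^1$--$L^1$ bounds for $\mathcal P$ and $\mathcal P^*$ when $a\in S^{\kappa_1}$, then pass to $L^p$ by complex interpolation and duality. The paper states this last step in one line without detail, whereas you spell out the analytic family $a_z=a\,\jp{\xi}^{-\kappa+\kappa_1 z}$ and the admissibility check; the one caveat worth flagging is that for this bookkeeping to close you implicitly need (B2)--(B3) to give operator-norm bounds depending on finitely many $S^0$, resp.\ $S^{\kappa_1}$, seminorms of the amplitude (which is how those hypotheses are verified in Corollary \ref{Cor1}, but is not literally part of the statement of (B2)--(B3)).
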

Indeed, by Theorem \ref{Th:main} and Lemma \ref{Lem:main},
assumptions (B1)--(B3) induce the $H^1$-$L^1$-boundedness 
of the operators $\mathcal P$ and  $\mathcal P^*$
for $a=a(x,y,\xi)\in S^{\kappa_1}$
if we notice that $S^{\kappa_1}\subset S^0$.
Then by the duality and the complex interpolation argument,
we have the $L^p$-boundedness of $\mathcal P$ with the critical case
$a=a(x,y,\xi)\in S^{2\kappa_1|1/p-1/2|}$, hence also for $a=a(x,y,\xi)\in S^{\kappa}$
since $S^\kappa\subset S^{2\kappa_1|1/p-1/2|}$.
We remark that we can also conclude 
the $L^\infty$-$BMO$-boundedness of $\mathcal P$
if $a=a(x,y,\xi)\in S^{\kappa_1}$.

Assumptions (B2) and (B3) in Theorem \ref{Th:mainFIO}
are essentially the requirements
for phase functions $\Phi(x,y,\xi)$.
A condition for (B2) is given by Asada and Fujiwara \cite{AF},
while (B3) is given by Seeger, Sogge and Stein \cite{SSS}.
We state further conclusions based on them by
restricting our phase functions to the form
\[
\phi(x,y,\xi)=x\cdot\xi-\varphi(y,\xi)
\quad(\text{in other words
$\Phi(x,y,\xi)=y\cdot\xi-\varphi(y,\xi)$}).
\]
First we make precise the notion of homogeneity:
\begin{defn}\label{Def:hom}
We say that
{\it $\varphi=\varphi(y,\xi)$ is positively homogeneous of order $1$} if
\begin{equation}\label{EQ:hom}
\varphi(y,\lambda\xi)=\lambda\varphi(y,\xi)
\end{equation}
holds for all $y\in\R^n$,
$\xi\neq0$ and $\lambda>0$.
We also say that 
{\it $\varphi=\varphi(y,\xi)$ is positively homogeneous of order $1$
for large $\xi$}
if there exist a constant $R>0$ such that \eqref{EQ:hom}
holds for all $y\in\R^n$,
$|\xi|\geq R$ and $\lambda\geq1$.
\end{defn}
For the operator of the form
\begin{equation}\label{FIOp1}
P u(x)=
\int_{\R^n}\int_{\R^n} e^{i(x\cdot\xi-\varphi(y,\xi))}a(x,y,\xi)u(y)\, dy d\xi
\quad (x\in\R^n),
\end{equation}
we have the following boundedness:
\begin{cor}\label{Cor1}
Let $1<p<\infty$ and let $\kappa\leq-(n-1)|1/p-1/2|$.
Assume that $a=a(x,y,\xi)\in S^\kappa$.
Assume also the following conditions:
\begin{itemize}
\item[(C1)]
$\varphi(y,\xi)$ is a real-valued $C^\infty$-function
and $\partial^\gamma_\xi(y\cdot\xi-\varphi(y,\xi))\in S^0$
for $|\gamma|=1$.
\item[(C2)]
There exists a constant $C>0$ such that
$\abs{\det \partial_y\partial_\xi\varphi(y,\xi)}\geq C$
for all $y,\xi\in\R^n$.
\item[(C3)]
$\varphi(y,\xi)$ is positively homogeneous
of order $1$ for large $\xi$.
\end{itemize}
Then operator $P$ defined by \eqref{FIOp1} is $L^p(\R^n)$-bounded.
\end{cor}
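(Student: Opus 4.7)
The plan is to apply Theorem \ref{Th:mainFIO} with $\Phi(x,y,\xi)=y\cdot\xi-\varphi(y,\xi)$ and $\kappa_1=-(n-1)/2$, so that the critical order $2\kappa_1|1/p-1/2|=-(n-1)|1/p-1/2|$ matches the hypothesis on $\kappa$ and the inclusion $S^\kappa\subset S^{2\kappa_1|1/p-1/2|}$ handles the case of strict inequality. Thus the task reduces to verifying hypotheses (B1)--(B3) of Theorem \ref{Th:mainFIO} for our $\Phi$ and for $a\in S^{\kappa_1}$.

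Hypothesis (B1) is immediate from (C1), since $\Phi$ is independent of $x$. For (B2), I would compute the block matrix
\[
D(\phi)=\begin{pmatrix} 0 & I \\ -\partial_\xi\partial_y\varphi & -\partial_\xi^2\varphi \end{pmatrix},
\]
so that $|\det D(\phi)|=|\det\partial_y\partial_\xi\varphi|\geq C>0$ by (C2). The entries and all their derivatives are bounded: differentiating the $S^0$-condition (C1) in $y$ shows that $\delta_{ij}-\partial_{y_i}\partial_{\xi_j}\varphi\in S^0$, hence $\partial_y\partial_\xi\varphi$ is bounded with all derivatives, while a further $\xi$-derivative gives $\partial_\xi^2\varphi\in S^{-1}$. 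Combined with $a\in S^0$, Theorem \ref{Th:AF} (Asada--Fujiwara) yields the $L^2$-boundedness required by (B2).

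The real work is in (B3). Here I would invoke the local $H^1_{comp}$--$L^1_{loc}$ boundedness of Seeger--Sogge--Stein \cite{SSS} for $a\in S^{-(n-1)/2}$, which is applicable because (C3) provides positive homogeneity of order $1$ in $\xi$, (C2) supplies the local graph condition, and (C1) supplies the phase-regularity needed in that theorem. The delicate step is uniformity in translation: the translated operator $\tau_h^*P\tau_h$ has phase $x\cdot\xi-\varphi(y+h,\xi)$ and amplitude $a(x+h,y+h,\xi)$, and the crucial observation is that the hypotheses (C1)--(C3) are translation-invariant in the precise sense that the constants $C_{\alpha\beta\gamma}$ in (C1), the lower bound $C$ in (C2), and the homogeneity radius $R$ in (C3) are unchanged when $(y,x)$ is replaced by $(y+h,x+h)$. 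Since the SSS estimate depends only on finitely many such seminorms, the $H^1$--$L^1$ operator norm of $\chi_B\tau_h^*P\tau_h\chi_B$ is bounded uniformly in $h\in\R^n$ for any fixed compact $B$. For the adjoint, the phase of $P^*$ computes to $\varphi(x,\xi)-y\cdot\xi$; this is again an SSS-type FIO, whose graph condition $|\det\partial_x\partial_\xi\varphi(x,\xi)|\geq C$ and phase-regularity follow from (C1) and (C2), so the same uniform bound applies to $P^*$.

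With (B1)--(B3) verified at $\kappa_1=-(n-1)/2$, Theorem \ref{Th:mainFIO} yields the $L^p$-boundedness of $P$. The main obstacle in this plan is the careful bookkeeping in (B3): one must track through the Seeger--Sogge--Stein argument that only finitely many seminorms, all translation-invariant, govern the local $H^1$--$L^1$ bound, and one must reformulate $P^*$ so that it manifestly fits the SSS framework with its graph condition derived from (C2).
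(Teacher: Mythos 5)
Your proposal follows essentially the same route as the paper: reduce to Theorem \ref{Th:mainFIO} with $\kappa_1=-(n-1)/2$, verify (B1)--(B2) from (C1)--(C2) via Asada--Fujiwara, and obtain (B3) from Seeger--Sogge--Stein together with the observation that the relevant seminorms are translation invariant and likewise invariant under the adjoint replacement. The verification of (B2) via the block form of $D(\phi)$ and the derivation of boundedness of $\partial_y\partial_\xi\varphi$ and $\partial_\xi^2\varphi$ from (C1) is correct and matches the paper's (implicit) reasoning.

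There is, however, a genuine gap in your verification of (B3). The Seeger--Sogge--Stein theorem requires the phase to be positively homogeneous of degree one on \emph{all} of $\xi\neq 0$; condition (C3) only gives homogeneity for \emph{large} $\xi$, i.e.\ for $|\xi|\geq R$. You cannot invoke SSS directly: the phase may be badly behaved (not even homogeneous, possibly with no useful structure) near $\xi=0$. The paper addresses exactly this point by splitting $a=a\,g+a(1-g)$ with a cutoff $g\in C_0^\infty$ equal to $1$ near the origin. For the high-frequency piece $a(1-g)$, one may modify $\varphi$ near $\xi=0$ (harmlessly, since the amplitude vanishes there) to make it globally homogeneous and then apply SSS with the uniformity-in-$h$ bookkeeping you describe. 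For the low-frequency piece $a\,g$, the corresponding operators $P_2$ and $P_2^*$ have integral kernels that are $O((1+|x-y|^2)^{-n})$ by $n$ integrations by parts in $\xi$ (using (C1), $a\in S^{-(n-1)/2}$, and the compact $\xi$-support), hence are bounded on $L^1$, giving the uniform $H^1_{comp}$--$L^1_{loc}$ bound for that piece directly. Without this splitting your argument cannot close, so it should be added. (Also, for the adjoint $P^*$ the paper points out that one should trace the argument in Stein's book rather than directly in \cite{SSS}, and that (C2) is what supplies the graph condition there; your remark that the phase of $P^*$ is $\varphi(x,\xi)-y\cdot\xi$ and your appeal to (C2) is consistent with this, though stated more briefly.)
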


\begin{proof}
Let us induce assumptions (B1)--(B3) of Theorem \ref{Th:mainFIO}
with $\kappa_1=-(n-1)/2$
from the assumptions (C1)--(C3) of Corollary \ref{Cor1}
for the special case $\phi(x,y,\xi)=x\cdot\xi-\varphi(y,\xi)$ or,
in other words, for $\Phi(x,y,\xi)=y\cdot\xi-\varphi(y,\xi)$.
We remark that (B1) is just an interpretation of assumption (C1).
As for (B2),
a sufficient condition for the $L^2$-boundededness of $\mathcal P$
is known from Asada and Fujiwara \cite{AF}, that is, Theorem \ref{Th:AF}
in Introduction.
In particular,
(B2) is fulfilled if (C1) and (C2) are satisfied.

Let us discuss (B3).
A sufficient condition for the $H^1_{comp}$-$L^1_{loc}$-boundedness
of $P$ 
is known by the work of Seeger, Sogge and Stein \cite{SSS},
that is, $P$ is $H^1_{comp}$-$L^1_{loc}$-bounded
for $a=a(x,y,\xi)\in S^{-(n-1)/2}$
if $\varphi(y,\xi)$ is a real-valued $C^\infty$-function
on $\R^n\times(\R^n\setminus0)$ and positively homogeneous of order $1$.
If we carefully trace the argument in \cite{SSS},
we can say that
$\chi_{K} P \chi_{K}$ is $H^1(\R^n)$-$L^1(\R^n)$-bounded
for any compact set $K\subset\R^n$
and its operator norm is bounded by a constant depending only on
$n$, $K$ and quantities
\begin{align*}
&M_\ell=\sum_{|\alpha|+|\beta|+|\gamma|\leq \ell}\sup_{x,y,\xi\in\R^n}
|\partial^\alpha_x\partial^\beta_y
\partial^\gamma_\xi a(x,y,\xi)\jp{\xi}^{(n-1)/2+|\gamma|)}|,
\\
&N_\ell=\sum_{\substack{|\beta|\leq\ell \\ 1\leq|\gamma|\leq \ell}}
\sup_{\substack{x,y\in\R^n\\ \xi\neq0}}
|\partial^\beta_y
\partial^\gamma_\xi (y\cdot\xi-\varphi(y,\xi))\abs{\xi}^{-(1-|\gamma|)}|,
\end{align*}
with some large $\ell$.
The same is true for $P^*$ if we trace the argument in \cite{St2} instead
but we require (C2) in this case.
Then $P$ and $P^*$ are uniformly $H^1_{comp}$-$L^1_{loc}$-bounded
if $M_\ell$ and $N_\ell$ are finite
since the quantities $M_\ell$ and $N_\ell$ are invariant under the replacements
\eqref{adjoint} and \eqref{translation}.

Based on this fact, $P$ and $P^*$ are uniformly
$H^1_{comp}$-$L^1_{loc}$-bounded if $a\in S^{-(n-1)/2}$
under the assumptions (C1)--(C3).
In fact, let us split $a(x,y,\xi)$ into the sum of $a(x,y,\xi)g(\xi)$ and
$a(x,y,\xi)(1-g(\xi))$ with an appropriate smooth cut-off function
$g\in C^\infty_0(\R^n)$ which is equal to $1$ near the origin.
Then for the terms $P_1$ and $P_1^*$ corresponding to $a(x,y,\xi)(1-g(\xi))$,
we can regard $\varphi(y,\xi)$ as a positively homogeneous function
of order $1$ by a modification near $\xi=0$, and they are
uniformly $H^1_{comp}$-$L^1_{loc}$-bounded by the above observation.
On the other hand, the terms $P_2$ and $P_2^*$ corresponding
to $a(x,y,\xi)g(\xi)$ are 
$L^1$-bounded (hence uniformly $H^1_{comp}$-$L^1_{loc}$-bounded)
because 
\begin{align*}
&P_2u(x)=\int  K(x,y) u(y)\,dy,\quad P_2^*u(x)=\int \overline{K(y,x)} u(y)\,dy,
\\
& K(x,y)
=\int_{\R^n}e^{i(x\cdot\xi-\phi(y,\xi))}
a(x,y,\xi)g(\xi)\,d\xi,
\end{align*}
and the integral kernel $K(x,y)$
is integrable in both $x$ and $y$.
This fact can be verified by the integration by parts
\begin{align*}
K(x,y)
&=(1+|x-y|^2)^{-n} \int_{\R^n}(1-\Delta_\xi)^n e^{i(x-y)\cdot\xi}\cdot
e^{i(y\cdot\xi-\phi(y,\xi))}a(x,y,\xi)g(\xi)\,d\xi
\\
&=(1+|x-y|^2)^{-n}\int_{\R^n} e^{i(x-y)\cdot\xi}\cdot
(1-\Delta_\xi)^n \{e^{i(y\cdot\xi-\phi(y,\xi))}a(x,y,\xi)g(\xi)\}\,d\xi
\end{align*}
followed by the the conclusion
\[
\abs{K(x,y)}\leq C(1+|x-y|^2)^{-n}
\]
because of assumptions (C1), $a\in S^{-(n-1)/2}$, and $g\in C^\infty_0$.

As a conclusion, (B3) is fulfilled if (C1)--(C3) are satisfied, and
the proof of Corollary \ref{Cor1} is complete.
\end{proof}


We can admit positively homogeneous phase functions
which might have singularity
at the origin for a special kind of operators of the form
\begin{equation}\label{FIOp2}
T u(x)=\int_{\R^n} e^{i(x\cdot\xi+\psi(\xi))}a(x,\xi)\widehat{u}(\xi)
\, d\xi\quad (x\in\R^n).
\end{equation}
For such operators we have the following boundedness:
\begin{cor}\label{Cor2}
Let $1<p<\infty$ and let $\kappa\leq-(n-1)|1/p-1/2|$.
Assume that $a=a(x,\xi)\in S^\kappa$ and
that $\psi=\psi(\xi)$ is a real-valued $C^\infty$-function on $\R^n\setminus0$
which is positively homogeneous
of order $1$.
Then the operator $T$ defined by \eqref{FIOp2} is $L^p(\R^n)$-bounded.
\end{cor}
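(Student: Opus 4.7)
The difficulty is that $\psi$ is only smooth on $\R^n\setminus 0$, so Corollary~\ref{Cor1} cannot be applied to $T$ directly: with $\varphi(y,\xi) = y\cdot\xi - \psi(\xi)$, the derivatives $\partial^\gamma_\xi\psi$ for $|\gamma|\geq 2$ behave like $|\xi|^{1-|\gamma|}$ near the origin and fail to lie in $S^0$, violating (C1). The plan is to split off the high and low frequencies of $T$: the high-frequency part will reduce to Corollary~\ref{Cor1} after a harmless smoothing of $\psi$, and the low-frequency part will factor as a standard pseudo-differential operator composed with a bounded Fourier multiplier.

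Fix $g\in C^\infty_0(\R^n)$ with $g\equiv 1$ for $|\xi|\leq 1$ and $\supp g\subset \{|\xi|\leq 2\}$, and write $T = T_{\mathrm{high}} + T_{\mathrm{low}}$ with amplitudes $a(x,\xi)(1-g(\xi))$ and $a(x,\xi)g(\xi)$, respectively. For $T_{\mathrm{high}}$ the amplitude vanishes on $|\xi|\leq 1$, so replacing $\psi$ by any $\tilde\psi\in C^\infty(\R^n)$ that equals $\psi$ on $|\xi|\geq 1$ leaves the operator unchanged. Such a $\tilde\psi$ satisfies $\partial_\xi\tilde\psi\in S^0$ (smoothness is inherited globally, while for $|\xi|$ large the order-$1$ homogeneity of $\psi$ forces the higher $\xi$-derivatives to decay appropriately), hence (C1) holds; (C2) is immediate from $\partial_y\partial_\xi(y\cdot\xi - \tilde\psi(\xi)) = I$, and (C3) comes from the homogeneity of $\psi$ at infinity. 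Since $a(x,\xi)(1-g(\xi))\in S^\kappa$, Corollary~\ref{Cor1} gives the $L^p$-boundedness of $T_{\mathrm{high}}$.

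For $T_{\mathrm{low}}$, choose a companion cutoff $g_1\in C^\infty_0(\R^n)$ with $g_1\equiv 1$ on $\supp g$ and observe the factorisation
\[
T_{\mathrm{low}} = (ag)(x,D)\circ m(D),\qquad m(\xi) = e^{i\psi(\xi)}g_1(\xi),
\]
valid because $(ag)(x,\xi)g_1(\xi) = (ag)(x,\xi)$. The pseudo-differential factor $(ag)(x,D)$ has a smooth symbol compactly supported in $\xi$, so its kernel is Schwartz-class in $x-y$ uniformly in $x$, and it is $L^p$-bounded for every $1\leq p\leq\infty$. To handle $m(D)$ one invokes the Mihlin--H\"ormander multiplier theorem.

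The main obstacle is the verification of the Mihlin condition $|\xi|^{|\alpha|}|\partial^\alpha m(\xi)|\leq C_\alpha$ near $\xi = 0$, where all derivatives of $\psi$ are singular. Because $\psi$ is positively homogeneous of order $1$ on $\R^n\setminus 0$, one has $|\partial^\mu\psi(\xi)|\leq C|\xi|^{1-|\mu|}$, and by Fa\`a di Bruno $|\partial^\beta e^{i\psi(\xi)}|\leq C|\xi|^{1-|\beta|}$ for $|\beta|\geq 1$. Leibniz applied to $m = e^{i\psi}g_1$ then yields $|\partial^\alpha m(\xi)|\leq C|\xi|^{1-|\alpha|}$ on $\supp g_1$, whence $|\xi|^{|\alpha|}|\partial^\alpha m(\xi)|\leq C|\xi|\leq C$: the gain $|\xi|^{1}$ stemming from the order-$1$ homogeneity exactly compensates the loss $|\xi|^{-|\alpha|}$ incurred by each differentiation. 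Consequently $m(D)$ is $L^p$-bounded for every $1<p<\infty$, and combining this with the bound on $T_{\mathrm{high}}$ finishes the proof.
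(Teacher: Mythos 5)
Your proof is correct and follows essentially the same route as the paper: split the amplitude by a low-frequency cutoff $g$, treat the high-frequency part via Corollary~\ref{Cor1} after smoothing $\psi$ near the origin (a step the paper leaves implicit since (C1) demands a globally $C^\infty$ phase), and factor the low-frequency part as a compactly supported pseudo-differential operator composed with the multiplier $e^{i\psi(\xi)}g(\xi)$, whose $L^p$-boundedness follows from a Mihlin/Marcinkiewicz-type estimate using the degree-$1$ homogeneity of $\psi$. Your Fa\`a di Bruno computation actually gives the slightly sharper bound $|\partial^\alpha(e^{i\psi}g_1)|\le C|\xi|^{1-|\alpha|}$, but this only reinforces the multiplier condition the paper uses.
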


\begin{proof}
Again we spilt the amplitude $a(x,\xi)$ into the sum of $a(x,\xi)g(\xi)$ and
$a(x,\xi)(1-g(\xi))$ as in the proof of Corollary \ref{Cor1}.
We remark that the operator $T$ defined by \eqref{FIOp2}
is the operator $P$ defined by \eqref{FIOp1}
with $\varphi(y,\xi)=y\cdot\xi-\psi(\xi)$ and
$a(x,y,\xi)=a(x,\xi)$ independent of $y$.
For the term $T_1$ corresponding to $a(x,\xi)(1-g(\xi))$,
we just apply Corollary \ref{Cor1}.
For the term $T_2$ corresponding to $a(x,\xi)g(\xi)$,
we have
\[
 T_2u(x)
=
\int_{\R^n} e^{i(x\cdot\xi+\psi(\xi))}a(x,\xi)g(\xi)\widehat u(\xi)\, d\xi
=
a(X,D_x)e^{i\psi(D_x)}g(D_x)u(x).
\]
The pseudo-differential operator $a(X,D_x)$ is $L^p$-bounded 
(see Kumano-go and Nagase \cite{KN})
and the Fourier multiplier $e^{i\psi(D_x)}g(D_x)$ is also $L^p$-bounded
by the Marcinkiewicz theorem (see Stein \cite{St}) 
since
$\abs{\partial^\alpha \p{e^{i\psi(\xi)}g(\xi)}}
\leq C_\alpha|\xi|^{-|\alpha|}$ for any multi-index $\alpha$.
The proof of Corollary \ref{Cor2} is complete.
\end{proof}

\end{document}